\newtheoremstyle{localthm}
	{5pt} % space above
	{5pt} % space below
	{\sl} % Body font
	{} % Indent amount
	{\bf} % Theorem head font
	{{\rm.}} % Punctuation after theorem head
	{.7em} % Space after theorem head
	{} % Theorem head spec ?
\theoremstyle{localthm}
\newtheorem{Theorem}{Theorem}%[section]
\newtheorem{Lemma}[Theorem]{Lemma}
\newtheoremstyle{localrem}
	{5pt} % space above
	{5pt} % space below
	{\rm} % Body font
	{} % Indent amount
	{\bf} % Theorem head font
	{{\rm.}} % Punctuation after theorem head
	{.7em} % Space after theorem head
	{} % Theorem head spec ?
\theoremstyle{localrem}
\def\bs{\boldsymbol}
\def\Ex{\operatorname{\mathbb{E}}}
\def\Pr{\operatorname{\mathbb{P}}}
\def\R{\mathbb{R}}
\def\BB{\mathcal{B}}
\def\XX{\mathcal{X}}
\begin{document}
%%%%%%%%%%%%%%%%

\addtolength{\baselineskip}{+.2\baselineskip}

\title{Honest Confidence Bands for Isotonic Quantile Curves}

\author{Lutz D\"umbgen, Lukas L{\"u}thi\\
University of Bern}

\date{\today}

\maketitle

\paragraph{Abstract.}
We provide confidence bands for isotonic quantile curves in nonparametric univariate regression with guaranteed given coverage probability. The method is an adaptation of the confidence bands of D\"{u}mbgen and Johns (2004) for isotonic median curves.

\paragraph{AMS subject classification:}
63G08, 62G15, 62G20

\paragraph{Key words:}
Binomial distribution, union-intersection test.

%=====================
\section{Introduction}
%=====================

Let $(X_1,Y_1), (X_2,Y_2), \ldots, (X_n,Y_n)$ be independent random pairs consisting of covariate values $X_i$ in some real interval $\XX$ and response values $Y_i \in \R$. We assume that $\Pr(Y_i \le y \,|\, X_i) = F(y \,|\, X_i)$ with unknown conditional distribution functions $F(\cdot \,|\, x)$, $x \in \XX$. For any fixed $\gamma \in (0,1)$ and $x \in \XX$, let $Q_\gamma(x)$ be a $\gamma$-quantile of $F(\cdot \,|\, x)$, that is,
\[
	F(Q_\gamma(x)- |\,x) \ \le \ \gamma \ \le \ F(Q_\gamma(x) \,|\, x) .
\]
Our goal is to compute a $(1 - \alpha)$-confidence band $(L,U)$ for $Q_\gamma$ under the sole assumption that $Q_\gamma : \XX \to \R$ is isotonic, i.e.\ non-decreasing. Precisely, we want to determine functions $L = L(\cdot, \mathrm{data})$ and $U = U(\cdot, \mathrm{data})$ on $\XX$ with values in $[-\infty,\infty]$ such that
\[
	\Pr(L \le Q_\gamma \le U \ \text{on} \ \XX) \ \ge \ 1 - \alpha
\]
whenever $Q_\gamma$ is isotonic. In Section~\ref{sec:Construction} we describe the construction of the bands. It is a simplified and generalized version of the confidence bands of \cite{Duembgen_Johns_2004} who treated the case $\gamma = 0.5$ only. The bands are also similar in spirit to the confidence bands of \cite{Dimitriadis_etal_2022} for univariate binary regression or the confidence bands of \cite{Yang_Barber_2019} for isotonic regression with subgaussian errors. An important property of the bands is that their computation requires only $O(n^2)$ steps. An alternative method to construct confidence bands for isotonic quantile curves has been proposed recently by \cite{Chatterjee_Sen_2021}, but it requires certain a priori knowlegde about the conditional distribution functions $F(\cdot \,|\, x)$, $x \in \XX$.

Section~\ref{sec:Asymptotics} presents some asymptotic properties of the bands as the sample size $n$ tends to infinity. The results are similar to the results of \cite{Duembgen_Johns_2004} and \cite{Moesching_Duembgen_2020}. The latter paper treats point estimation of the conditional distribution functions $F(\cdot\,|\, x)$ under the assumption that they are isotonic in $x \in \XX$ with respect to the usual stochastic order. Section~\ref{sec:Examples} illustrates the bands with some numerical examples. There it is also illustrated what happens if the quantile functions are even assumed to be S-shaped, that is, isotonic and convex-concave, and if we replace our confidence band with the hull of all S-shaped functions fitting within the band. This stronger shape-constraint is plausible in many applications, see \cite{Feng_etal_2021} and the references cited therein. The latter paper treats least squares estimation of S-shaped mean functions.

%----------------------------------
\section{Construction of the bands}
\label{sec:Construction}
%----------------------------------

We condition on the observed values $X_1, \ldots, X_n$, so they become fixed real numbers, and the observations $Y_i$ are independent with distribution functions $F_i := F(\cdot \,|\, X_i)$. For an interval $B \subset \XX$, let
\[
	N(B) \ := \ \# \{i \colon X_i \in B\} .
\]
Suppose that $g : \XX \to \R$ is a candidate for $Q_\gamma$. If $N(B) > 0$ and $g = Q_\gamma$, then the distribution of the random sum
\[
	S_\ell(B,g) \ := \ \# \{i \colon X_i \in B, Y_i \le g(X_i)\}
\]
is stochastically greater than or equal to a binomial distribution with parameters $N(B)$ and $\gamma$. That is, for any integer $k \ge 0$,
\[
	\Pr \bigl( S_\ell(B,g) \le k \bigr) \ \le \ F_{N(B),\gamma}(k) ,
\]
where $F_{m,p}$ denotes the distribution function of the binomial distribution with parameters $m \in \mathbb{N}$ and $p \in [0,1]$. Analogously, the distribution of the random sum
\[
	S_u(B,g) \ := \ \# \{i \colon X_i \in B, Y_i \ge g(X_i)\}
\]
is stochastically greater than or equal to a binomial distribution with parameters $N(B)$ and $1 - \gamma$. For our confidence bands, we need an analogous statement simultaneously for all intervals $B \subset \XX$.

\begin{Lemma}
\label{lem:Coupling}
There exists a coupling between the observations $Y_1, \ldots, Y_n$ and stochastically independent Bernoulli variables $\xi_1, \ldots, \xi_n \in \{0,1\}$ with $\Ex(\xi_i) = \gamma$, such that for any version $g$ of $Q_\gamma$ and all intervals $B \subset \XX$,
\[
	S_\ell(B,g) \ \ge \ T_\ell(B) := \sum_{i\colon X_i \in B} \xi_i
	\quad\text{and}\quad
	S_u(B,g) \ \ge \ T_u(B) := \sum_{i\colon X_i \in B} (1 - \xi_i) .
\]
\end{Lemma}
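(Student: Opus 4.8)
\emph{Proof strategy.}
I would build the coupling one observation at a time; the seemingly hard part, obtaining the inequality simultaneously for \emph{all} intervals $B$, turns out to be free, since a pointwise bound such as $\xi_i \le \mathbf{1}\{Y_i \le g(X_i)\}$, once summed over $\{i : X_i \in B\}$, already yields $T_\ell(B) \le S_\ell(B,g)$ for every $B$, and similarly for the upper statement. So fix, for each $x \in \XX$, the smallest and largest $\gamma$-quantiles of $F(\cdot\,|\,x)$,
\[
	q^-(x) \ := \ \inf\bigl\{y \in \R : F(y\,|\,x) \ge \gamma\bigr\}, \qquad q^+(x) \ := \ \sup\bigl\{y \in \R : F(y-\,|\,x) \le \gamma\bigr\} ,
\]
and note that every version $g$ of $Q_\gamma$ satisfies $q^-(x) \le g(x) \le q^+(x)$ on $\XX$. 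It therefore suffices to construct independent Bernoulli$(\gamma)$ variables $\xi_1,\dots,\xi_n$, with $\xi_i$ a function of $Y_i$ and some independent auxiliary randomness, such that almost surely
\[
	\mathbf{1}\{Y_i < q^+(X_i)\} \ \le \ \xi_i \ \le \ \mathbf{1}\{Y_i \le q^-(X_i)\} \qquad (1 \le i \le n) .
\]
Indeed, this sandwich together with $q^-(X_i) \le g(X_i) \le q^+(X_i)$ gives $\xi_i \le \mathbf{1}\{Y_i \le g(X_i)\}$ and $1-\xi_i \le \mathbf{1}\{Y_i \ge g(X_i)\}$, and summing over $\{i : X_i \in B\}$ yields $T_\ell(B) \le S_\ell(B,g)$ and $T_u(B) \le S_u(B,g)$ for all $B$ and all versions $g$ at once.

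For the one-dimensional construction, write $F_i := F(\cdot\,|\,X_i)$. Two elementary facts do the work. First, the definitions of $q^{\pm}$ yield $F_i(q^-(X_i)) \ge \gamma \ge F_i(q^+(X_i)-)$, so $\Pr(Y_i < q^+(X_i)) \le \gamma \le \Pr(Y_i \le q^-(X_i))$. Second, $F_i$ is constant, equal to $\gamma$, on the open interval $\bigl(q^-(X_i), q^+(X_i)\bigr)$, so $\Pr\bigl(q^-(X_i) < Y_i < q^+(X_i)\bigr) = 0$; hence, almost surely, $\mathbf{1}\{Y_i < q^+(X_i)\} \le \mathbf{1}\{Y_i \le q^-(X_i)\}$, and these two indicators can fail to coincide only on the event $\{q^-(X_i) = q^+(X_i) =: q\} \cap \{Y_i = q\}$, where $F_i$ may carry an atom.

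Accordingly I would set $\xi_i := \mathbf{1}\{Y_i \le q^-(X_i)\}$ whenever $q^-(X_i) < q^+(X_i)$, in which case $\Pr(\xi_i = 1) = F_i(q^-(X_i)) = \gamma$ and the sandwich above holds almost surely; and whenever $q^-(X_i) = q^+(X_i) = q$ I would put
\[
	\xi_i \ := \ \mathbf{1}\{Y_i < q\} + \mathbf{1}\{Y_i = q\}\,\mathbf{1}\{U_i \le p_i\} , \qquad p_i \ := \ \frac{\gamma - F_i(q-)}{F_i(q) - F_i(q-)} \in [0,1] ,
\]
with $U_i$ uniform on $[0,1]$ and independent of everything else (and $p_i$ immaterial when the atom has mass $0$). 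A one-line computation then gives $\Pr(\xi_i = 1) = \gamma$ together with $\mathbf{1}\{Y_i < q\} \le \xi_i \le \mathbf{1}\{Y_i \le q\}$, and $\xi_i$ is a measurable function of $(Y_i, U_i)$; since these pairs are mutually independent, so are $\xi_1,\dots,\xi_n$, which completes the construction.

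The one delicate point, and the real obstacle, is the interaction of quantile non-uniqueness with atoms of $F_i$: a single $\xi_i$ has to be admissible for \emph{every} version $g$ at once, so it must be squeezed between $\mathbf{1}\{Y_i < q^+(X_i)\}$ and $\mathbf{1}\{Y_i \le q^-(X_i)\}$, and one has to verify that the slack between these two bounds lives entirely on a $\Pr$-null set, namely the flat stretch of $F_i$ at level $\gamma$, together with a single atom whose mass the auxiliary uniform can split so as to restore the marginal $\gamma$. I would also point out explicitly that the asserted inequalities can only hold almost surely: on the null event that some $Y_i$ lands strictly between $q^-(X_i)$ and $q^+(X_i)$, no value of $\xi_i \in \{0,1\}$ is admissible, so the conclusion must be read in the almost-sure sense, or else one passes to an identically distributed copy of $(Y_1,\dots,Y_n)$ that never visits those open flat stretches.
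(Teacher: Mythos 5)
Your proof is correct, but it runs in the opposite direction from the paper's. The paper realizes the coupling by \emph{generating} the data: it sets $Y_i := F_i^{-1}(U_i)$ for independent uniforms $U_i$ and simply puts $\xi_i := 1_{[U_i \le \gamma]}$; the pointwise inequalities $1_{[Y_i \le g(X_i)]} \ge 1_{[U_i \le \gamma]}$ and $1_{[Y_i \ge g(X_i)]} \ge 1_{[U_i > \gamma]}$ then drop out in two lines from $F_i(g(X_i)) \ge \gamma \ge F_i(g(X_i)-)$, with no case analysis and with the inequalities holding surely, not just almost surely. You instead keep the given $Y_i$ and construct $\xi_i$ as a randomized function of $Y_i$, thresholding at the extreme quantiles $q^\pm(X_i)$ and using an auxiliary uniform to split the atom at a unique quantile so that the marginal comes out exactly $\gamma$. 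Your argument is sound: the sandwich $1_{[Y_i < q^+(X_i)]} \le \xi_i \le 1_{[Y_i \le q^-(X_i)]}$ does imply the required pointwise bounds for every version $g$ simultaneously, the identity $F_i \equiv \gamma$ on $(q^-(X_i),q^+(X_i))$ correctly shows the only obstruction lives on a null set, and the atom-splitting computation is right. What your route buys is a coupling in which $\xi_i$ is measurable with respect to $(Y_i,U_i)$ with $U_i$ independent of the data — i.e., the Bernoulli variables are built on top of the observed sample rather than the sample being rebuilt from uniforms — at the cost of a case distinction and an almost-sure (rather than sure) conclusion; since the lemma only asserts existence of a coupling, either formulation suffices, but the paper's quantile-transform device is the shorter path and is also the one reused implicitly elsewhere (both indicator inequalities are stated directly in terms of $U_i$). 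You correctly identified the one delicate point (non-unique quantiles interacting with atoms); note that the paper's construction sidesteps it entirely because $F_i^{-1}(U_i)$ never lands in the open flat stretch of $F_i$ at level $\gamma$.
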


\begin{proof}[\bf Proof of Lemma~\ref{lem:Coupling}]
Let $U_1, \ldots, U_n$ be independent random variables with uniform distribution on $(0,1)$. Then the quantile transformations
\[
	Y_i \ := \ F_i^{-1}(U_i) = \min\{y \in \R \colon F_i(y) \ge U_i\}
\]
yield independent random variables $Y_1, \ldots, Y_n$ with distribution functions $F_1,\ldots, F_n$, respectively. For any version $g$ of $Q_\gamma$ and $1 \le i \le n$, it follows from $F_i(g(X_i)) \ge \gamma$ that
\[
	1_{[Y_i \le g(X_i)]} \
	= \ 1_{[F_i(g(X_i)) \ge U_i]} \
	\ge \ 1_{[U_i \le \gamma]} .
\]
Similarly, the inequality $F_i(g(X_i)-) \le \gamma$ implies that
\[
	1_{[Y_i \ge g(X_i)]} \
	= \ 1_{[F_i(y) < U_i \ \text{for all} \ y < g(X_i)]} \
	\ge \ 1_{[F_i(g(X_i)-) < U_i]} \
	\ge \ 1_{[U_i > \gamma]} = 1 - 1_{[U_i \le \gamma]} .
\]
Consequently, $\xi_i := 1_{[U_i \le \gamma]}$ defines Bernoulli variables $\xi_1, \ldots, \xi_n$ with the desired properties.
\end{proof}

The confidence band can be constructed as follows. Let $\BB$ be a given family of compact intervals $B \subset \XX$ with endpoints in $\{X_1,\ldots,X_n\}$. Suppose that for any integer $m \in \{1,\ldots,n\}$, we have specified integers $c_\ell(m), c_u(m) \in \{0,1,\ldots,m\}$ such that
\begin{equation}
\label{ineq:conf.level}
	\Pr \bigl( T_\ell(B) \ge c_\ell(N(B)) \ \text{and} \ T_u(B) \ge c_u(N(B))
		\ \ \text{for all} \ B \in \BB \bigr) \
	\ge \ 1 - \alpha .
\end{equation}
Then we may claim with confidence $1 - \alpha$ that for any version $g$ of $Q_\gamma$ and all intervals $B \in \BB$,
\begin{equation}
\label{ineq:coverage}
	S_\ell(B,g) \ \ge \ c_\ell(N(B))
	\quad\text{and}\quad
	S_u(B,g) \ \ge \ c_u(N(B)) .
\end{equation}
Let $Y_{B:1} \le Y_{B:2} \le \cdots \le Y_{B:N(B)}$ be the order statistics of the observations $Y_i$ with $X_i \in B$, and let $Y_{B:0} := -\infty$, $Y_{B:N(B)+1} := \infty$. Then it follows from isotonicity of $g$ that
\[
	S_\ell(B,g) \ \ge \ c_\ell(N(B))
	\quad\text{implies that}\quad
	g(\max(B)) \ \ge \ Y_{B:c_\ell(N(B))} ,
\]
because $S_\ell(B,g) \le \# \bigl\{ i \colon X_i \in B, Y_i \le g(\max(B)) \bigr\}$. Likewise,
\[
	S_u(B,g) \ \ge \ c_u(N(B))
	\quad\text{implies that}\quad
	g(\min(B)) \ \le \ Y_{B:N(B)+1-c_u(N(B))} .
\]
Assuming that $Q_\gamma$ is isotonic, $Y_{B:c_\ell(N(B))}$ is a lower bound for all values $Q_\gamma(x)$, $x \ge \max(B)$, and $Y_{B:N(B)+1-c_i(N(B))}$ is an upper bound for all values $Q_\gamma(x)$, $x \le \min(B)$. These considerations lead to the following $(1 - \alpha)$-confidence band $(L,U)$ for $Q_\gamma$:
\begin{align}
\label{eq:Def.L}
	L(x) \
	:= \ &\sup_{B \in \BB\colon \max(B) \le x} \, Y_{B:c_\ell(N(B))} , \\
\nonumber
	= \ &\sup_{B \in \BB\colon \max(B) \le x} \,
		\min \bigl\{ r \in [-\infty,\infty) \colon S_\ell(B,r) \ge c_\ell(N(B)) \bigr\} \\
\label{eq:Def.U}
	U(x) \
	:= \ &\inf_{B \in \BB\colon \min(B) \ge x} \, Y_{B:N(B)+1-c_u(N(B))} \\
\nonumber
	= \ &\inf_{B \in \BB\colon \min(B) \ge x} \,
		\max \bigl\{ r \in (-\infty,\infty] : S_u(B,r) \ge c_u(N(B)) \bigr\}
\end{align}
with the conventions that $\sup(\emptyset) := -\infty$ and $\inf(\emptyset) := \infty$, where $S_\ell(B,r) := \#\{i\colon X_i \in B, Y_i \le r\}$ and $S_u(B,r) := \#\{i\colon X_i \in B, Y_i \ge r\}$ for $r \in [-\infty,\infty]$.

\paragraph{Finding the critical values $c_\ell(m)$ and $c_u(m)$.}
Let $\tilde{n} \le n$ be the number of elements of the set $\{X_1,\ldots,X_n\}$. Then $\BB$ consists of at most $\tilde{n} (\tilde{n} + 1)/2$ different intervals. By Bonferroni's inequality, for any choice of the bounds $c_\ell(m)$ and $c_u(m)$,
\begin{align}
\nonumber
	\Pr \bigl( & T_\ell(B) < c_\ell(N(B)) \ \text{or} \ T_u(B) < c_u(N(B))
		\ \text{for some} \ B \in \BB \bigr) \\
\label{ineq:Bonferroni}
	&\le \ \sum_{m=1}^n h_m
		\bigl[ F_{m,\gamma}(c_\ell(m)-1) + F_{m,1-\gamma}(c_u(m)-1) \bigr] ,
\end{align}
where $h_m$ is the number of intervals $B \in \BB$ such that $N(B) = m$. Now let
\begin{equation}
\label{eq:UI.crit.values}
	c_\ell(m) \ := \ F_{m,\gamma}^{-1}(\kappa)
	\quad\text{and}\quad
	c_u(m) \ := \ F_{m,1-\gamma}^{-1}(\kappa)
\end{equation}
for some $\kappa \in (0,1]$. With a simple bisection search, one can find a maximal value of $\kappa$ such that the bound \eqref{ineq:Bonferroni} is not larger than $\alpha$. Since $\sum_{m=1}^n H(m) \le \tilde{n} (\tilde{n} + 1)/2$ and $F_{m,p}(F_{m,p}^{-1}(\kappa) - 1) \le \kappa$, we know that
\[
	\kappa \ \ge \ \frac{\alpha}{2\# \BB} \ \ge \ \frac{\alpha}{\tilde{n}(\tilde{n}+1)} .
\]
The simple choice $\kappa = \alpha / (\tilde{n}(\tilde{n}+1))$ would be similar to the approach of \cite{Yang_Barber_2019} or \cite{Dimitriadis_etal_2022}.

Alternatively, we propose to use the critical values in \eqref{eq:UI.crit.values} with $\kappa$ equal to a Monte-Carlo estimate of the $\alpha$-quantile of the random variable
\begin{equation}
\label{eq:UI-P-value}
	\min_{B \in \BB} \
		\min \bigl\{ F_{N(B),\gamma}(T_\ell(B)), F_{N(B),1-\gamma}(T_u(B)) \bigr\} .
\end{equation}

Both proposals correspond to a union-intersection test: For each interval $B \in \BB$ and any candidate $g$ for $Q_\gamma$, one may interpret
\[
	F_{N(B),\gamma}(S_\ell(B,g))
\]
as a p-value of the null hypothesis that $Q_\gamma \le g$ on $B$, and
\[
	F_{N(B),1-\gamma}(S_u(B,g))
\]
is a p-value of the null hypothesis that $Q_\gamma \ge g$ on $B$.

\paragraph{Choosing $\BB$.}
In principle, one could take $\BB$ to be the set of all $\tilde{n}(\tilde{n}+1)/2$ compact intervals $B \subset \XX$ with endpoints in $\{X_1,\ldots,X_n\}$. But for very large values $\tilde{n}$, the Bonferroni version of our critical values $c_\ell(m)$ and $c_u(m)$ may become rather conservative, while Monte-Carlo simulation of \eqref{eq:UI-P-value} may be too time-consuming. Alternatively, we propose to specify a subset $\mathcal{D}$ of $\{1,\ldots,\lceil\tilde{n}/2\rceil\}$ and to restrict ourselves to intervals $B \subset \XX$ such that the cardinality of $B \cap \{x_1,\ldots,x_n\}$ is in $\mathcal{D}$. Specific choices are the intersection of $\{1,\ldots,\lceil\tilde{n}/2\rceil\}$ with the set of numbers $a_\ell := 1 + \ell(\ell-1)/2$, $\ell \ge 1$. That is, $a_1 = 1$ and $a_{\ell+1} - a_\ell = \ell$. Here, $\# \mathcal{D} = O(\tilde{n}^{1/2})$ and thus, $\#\BB = O(\tilde{n}^{3/2})$. Alternatively, one could intersect $\{1,\ldots,\lceil\tilde{n}/2\rceil\}$ with the set of Fibonacci numbers $1,2,3,5,8,13,\ldots$ or the powers $2^\ell$, $\ell \ge 0$. In both cases, $\# \mathcal{D} = O(\log \tilde{n})$ and $\#\BB = O(\tilde{n} \log \tilde{n})$.

\paragraph{Explicit computation of $L$ and $U$.}
Let $z_1 < \cdots < z_{\tilde{n}}$ be the elements of $\{X_1,\ldots,X_n\}$. Note first that $L \equiv -\infty$ on $(-\infty,z_1)$, and $L$ is constant on all intervals $[z_j,z_{j+1})$, $1 \le j \le \tilde{n}$, where $z_{\tilde{n}+1} := \infty$. Similarly, $U \equiv \infty$ on $(z_{\tilde{n}},\infty)$, and $U$ is constant on all intervals $(z_{j-1},z_j]$, $1 \le j \le \tilde{n}$, where $z_0 := -\infty$. Thus it suffices to compute $\bs{L} = (L(z_j))_{j=1}^{\tilde{n}}$ and $\bs{U} = (U(z_j)_{j=1}^{\tilde{n}}$.

Now we describe an algorithm to compute $\bs{L}$ in $O(n^2)$ steps. The same algorithm can be applied to compute $\bs{U} = (U(z_j))_{j=1}^{\tilde{n}}$ by replacing temporarily the observations $(X_i,Y_i)$ with $(-X_i,-Y_i)$ and $\gamma$ with $1 - \gamma$. This is possible because our methods of setting $\kappa$ for the critical values in \eqref{eq:UI.crit.values} are invariant under these replacements.

In what follows, let $N_j := \#\{i\colon X_i \le z_j\}$ for $j \in \{0,1,\ldots,\tilde{n}\}$. Suppose that for some $k \in \{1,\ldots,\tilde{n}\}$, the value $L(z_{k-1})$ is known already, where $L(z_0) := -\infty$. Then $L(z_k)$ is the smallest number $r \in [L(z_{k-1}), \infty)$ such that
\[
	S([z_j,z_k],r) \ \ge \ c_\ell(N_k - N_{j-1})
	\quad\text{for all} \ [z_j,z_k] \in \BB .
\]
To determine this number $r$, we start with $r = L(z_{k-1})$ and check for $j = k, k-1, k-2, \ldots$ whether $[z_j,z_k]$ belongs to $\BB$ and, if yes, satisfies $S([z_j,z_k],r) \ge c_\ell(N_k - N_{j-1})$. While doing this, we determine the next candidate
\[
	r_{\rm new} \ := \ \min \bigl( \{\infty\} \cup \{Y_i \colon X_i \in [z_j,z_k], Y_i > r \bigr)
\]
for $r$. As soon as $[z_j,z_k] \in \BB$ but $S([z_j,z_k],r) < c_\ell(N_k - N_{j-1})$, we replace $r$ with $r_{\rm new}$ and reset $j$ to $k$. This leads to the algorithm described in Table~\ref{tab:AlgorithmLB}.

\begin{table}
\[
	\begin{array}{|l|}
	\hline
	r \leftarrow -\infty \\
	\text{for} \ k = 1, 2, \ldots, n \ \text{do}\\
	\strut\quad j \leftarrow k+1 \\
	\strut\quad S \leftarrow 0 \\
	\strut\quad r_{\rm new} \leftarrow \infty \\
	\strut\quad \text{while} \ j > 1 \ \text{do} \\
	\strut\quad\quad j \leftarrow j-1 \\
	\strut\quad\quad S \leftarrow
		S + \sum_{i\colon X_i = z_j} 1_{[Y_i \le r]} \\
	\strut\quad\quad r_{\rm new} \leftarrow
		\min \bigl( \{r_{\rm new}\} \cup \{Y_i\colon X_i = z_j, Y_i > r\} \bigr) \\
	\strut\quad\quad \text{if} \ S < c_\ell(N_k - N_{j-1}) \ \text{then}\\
	\strut\quad\quad\quad r \leftarrow r_{\rm new} \\
	\strut\quad\quad\quad j \leftarrow k+1 \\
	\strut\quad\quad\quad S \leftarrow 0 \\
	\strut\quad\quad\quad r_{\rm new} \leftarrow \infty \\
	\strut\quad\quad \text{end if} \\
	\strut\quad \text{end while} \\
	\strut\quad L(z_k) \leftarrow r \\
	\text{end for} \\
	\hline
	\end{array}
\]
\caption{Algorithm to compute the lower confidence bounds $L(\cdot)$.}
\label{tab:AlgorithmLB}
\end{table}

That this algorithm has running time $O(n^2)$ can be verified as follows: For a given pair $(k,r)$ in $\{1,\ldots,\tilde{n}\} \times (\{-\infty\} \cup \{Y_1,\ldots,Y_n\})$, we initialise and update $S$ and $r_{\rm new}$ for $j = k, k-1, k-2, \ldots$ until $j = 1$ or $[z_j,z_k] \in \BB$ but $S([z_j,z_k]) < c_\ell(N_k - N_{j-1})$. This requires $O(n)$ steps. The next pair $(k',r')$ satisfies either $k' > k$ or $r' > r$. Consequently, from the starting point $(k,r) = (1,-\infty)$, we arrive at the point $(k,r) = (\tilde{n}, L(z_{\tilde{n}}))$ in at most $\tilde{n}-1 + n = O(n)$ steps. Hence, the total running time is of order $O(n^2)$.

%----------------------------------
\section{Asymptotics}
\label{sec:Asymptotics}
%----------------------------------

Similarly as in \cite{Duembgen_Johns_2004}, \cite{Moesching_Duembgen_2020} and \cite{Dimitriadis_etal_2022}, we consider a triangular scheme of observations. For each sample size $n \ge 1$, we observe $(X_{n1}, Y_{n1}), \ldots, (X_{nn}, Y_{nn})$ with fixed numbers $X_{n1} \le \ldots \le X_{nn}$ in $\XX$ and independent random variables $Y_{n1}, \ldots, Y_{nn}$, where $Y_{ni}$ has distribution function $F(\cdot \,|\, X_{ni})$. Let $(L_n,U_n)$ be the confidence band described before for sample size $n$, based on a family $\BB_n \subset \BB_n^*$, where $\BB_n^*$ is the family of all compact intervals with endpoints in $\{X_{n1},\ldots,X_{nn}\}$. We analyze asymptotic properties of $(L_n,U_n)$ on a bounded open interval $(a_0,b_o) \subset \XX$ under the following three assumptions.

\textbf{(A)} For any set $B_* \in \BB_n^*$, there exists a set $B \in \BB_n$ such that $B \subset B^*$ and $N_n(B) \ge N_n(B^*)/2$.

\textbf{(B)} Let $\mathrm{Leb}(\cdot)$ denote Lebesgue measure, and let $N_n(B) = \#\{i\colon X_{ni} \in B\}$ for $B \subset \XX$. There exist constants $C_1, C_2 > 0$ such that for sufficiently large $n$,
\[
	N_n(B) \ \ge \ C_1 n \, \mathrm{Leb}(B)
\]
for arbitrary intervals $B \subset (a_o,b_o)$ such that $\mathrm{Leb}(B) \ge C_2 \log(n)/n$.

\textbf{(C)} There exist constants $D_1, D_2 > 0$ such that for all $x \in (a_o,b_o)$, the $\gamma$-quantile $Q_\gamma(x)$ is unique, and
\[
	\frac{F(Q_\gamma(x) + h \,|\, x) - \gamma}{h} \ \ge \ D_1
\]
for all nonzero $h \in [-D_2,D_2]$.

One can easily show that assumption~(A) is satisfied for all proposals of $\BB_n$ in the previous section. Assumption~(B) is satisfied, for instance, in a deterministic setting with fixed design points $X_{ni} = G^{-1}(i/n)$ for some distribution function $G$ on $\XX$ such that $G'$ exists and is bounded away from $0$ on $(a_o,b_o)$. If the $X_{ni}$ are the order statistics of $n$ independent random variables with such a distribution function $G$, the arguments in Section~4.3 of \cite{Moesching_Duembgen_2020} can be modified to show that Condition~(A) is satisfied almost surely, provided that $C_1, C_2 > 0$ are chosen appropriately.

\begin{Theorem}
\label{thm:asymptotics}
Suppose that assumptions~(A,B,C) are satisfied. Let $\rho_n = \log(n)/n$. There exist constants $C, C' > 0$ depending only on $C_1,C_2,D_1,D_2$ with the following properties:

\noindent
(i) Suppose that $Q_\gamma$ is constant on $(a_o,b_o)$. Then with asymptotic probability one,
\begin{align*}
	U_n(x) \ &\le \ p(x) + C \sqrt{\rho_n/(b_o - x)}
		\quad\text{for all} \ x \in (a_o,b_o - C'\rho_n] , \\
	L_n(x) \ &\ge \ p(x) - C \sqrt{\rho_n/(b_o - x)}
		\quad\text{for all} \ x \in [a_o+C'\rho_n,b_o) .
\end{align*}

\noindent
(ii) Suppose that $p$ is H{\"o}lder-continuous with exponent $\beta \in (0,1]$ and constant $\lambda > 0$ on $(a_o,b_o)$, that is, $\bigl| Q_\gamma(x) - Q_\gamma(x') \bigr| \le \lambda |x - x'|$ for all $x,x' \in (a_o,b_o)$. Then for $\delta_n := (\lambda^{-2} \rho_n)^{1/(2\beta + 1)}$,
\[
	\sup_{x \in (a_o,b_o-\delta_n]} \,
		\bigl( U_n(x) - Q_\gamma(x) \bigr)^+ , \,
	\sup_{x \in (a_o + \delta_n, b_o)} \,
		\bigl( Q_\gamma(x) - L_n(x) \bigr)^+ \
	\le \ C \lambda^{1/(2\beta + 1)} \rho_n^{\beta/(2\beta + 1)} .
\]
	
\noindent
(iii) Suppose that $Q_\gamma$ is discontinuous at some point $x_o \in (a_o,b_o)$. Then with asymptotic probability one,
\begin{align*}
	U_n(x) \ &\le \ Q_\gamma(x_o-) + C \sqrt{\rho_n/(x_o - x)}
		\quad\text{for all} \ x \in (a_o,x_o - C'\rho_n] , \\
	L_n(x) \ &\ge \ Q_\gamma(x_o+) - C \sqrt{\rho_n/(x - x_o)}
		\quad\text{for all} \ x \in [a_o+C'\rho_n,b_o) .
\end{align*}
\end{Theorem}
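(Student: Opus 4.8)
The plan is to derive all three parts from a single quantitative statement about how the critical values $c_\ell(m)$ and $c_u(m)$ behave, together with the coupling of Lemma~\ref{lem:Coupling} and a uniform (over intervals) deviation inequality for the coupled Bernoulli sums. First I would record the crucial fact that $\kappa = \kappa_n$ in \eqref{eq:UI.crit.values} satisfies $\kappa_n \ge \alpha/(\tilde n(\tilde n+1)) \ge \alpha/(n(n+1))$, so $-\log\kappa_n = O(\log n)$. Combining this with the binomial tail bound and the relation $F_{m,\gamma}(c_\ell(m)-1)\le\kappa_n$, I would show that $c_\ell(m) = m\gamma - O(\sqrt{m\log n}) $ and symmetrically $c_u(m) = m(1-\gamma) - O(\sqrt{m\log n})$, with the $O(\cdot)$ uniform in $m$; equivalently, $m - c_\ell(m) = m(1-\gamma) + O(\sqrt{m\log n})$. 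This is the arithmetic engine behind all the $\sqrt{\rho_n/\cdot}$ rates.

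Next I would set up the event on which the conclusions hold. On the $(1-\alpha)$-probability event \eqref{ineq:conf.level}, inequality \eqref{ineq:coverage} holds for every $B\in\BB_n$, so by the order-statistic argument in Section~\ref{sec:Construction} we get, for every $B\in\BB_n$ with $\max(B)\le x$, the lower bound $L_n(x)\ge Y_{B:c_\ell(N_n(B))}$, and similarly for $U_n$. The strategy for the upper bound on $U_n(x)$ (part (i)) is: pick a carefully chosen interval $B$ of the form $[x', x'']$ with $x''$ close to $b_o$ and $x'$ just above $x$, so that $N_n(B)\asymp C_1 n(b_o-x)$ by assumption~(B); by assumption~(A) we may assume $B\in\BB_n$ at the cost of a factor $2$ in $N_n(B)$. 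On this interval the quantile curve $Q_\gamma$ is constant equal to $p(x)$, so by the reverse direction of the coupling — really by a second high-probability event controlling $S_u(B,g)$ from above, obtained by applying the same coupling argument to get an upper Bernoulli coupling, or more simply by noting $S_u(B,g) = N_n(B) - \#\{i\in B: Y_i < g(X_i)\}$ and using a lower bound on the latter — one shows that the order statistic $Y_{B:N_n(B)+1-c_u(N_n(B))}$ cannot exceed $p(x)$ by more than the fluctuation scale, which by the estimate on $c_u$ is $O(\sqrt{\log n / N_n(B)}) = O(\sqrt{\rho_n/(b_o-x)})$. Here assumption~(C) converts a fluctuation in the empirical quantile index into a fluctuation on the $y$-axis of the same order. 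The restriction $x\le b_o-C'\rho_n$ is exactly what guarantees $\mathrm{Leb}(B)\ge C_2\log n/n$ so that (B) applies.

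For part (iii) the argument is identical except that $Q_\gamma$ is no longer constant on $[x',x'']$; instead one takes $x''$ just below $x_o$, uses monotonicity to bound $Q_\gamma\le Q_\gamma(x_o-)$ on $B$, and the same chain gives $U_n(x)\le Q_\gamma(x_o-)+C\sqrt{\rho_n/(x_o-x)}$. Part (ii) is the supremum version: one covers $(a_o,b_o-\delta_n]$ and for each $x$ chooses the interval length so as to balance the fluctuation term $\sqrt{\rho_n/\mathrm{Leb}(B)}$ against the bias term $\lambda\,\mathrm{Leb}(B)^\beta$ coming from Hölder continuity (on an interval of length $\ell$ the value $Q_\gamma$ can exceed $Q_\gamma(x)$ by $\lambda\ell^\beta$), which yields the optimal choice $\mathrm{Leb}(B)\asymp\delta_n$ and the rate $C\lambda^{1/(2\beta+1)}\rho_n^{\beta/(2\beta+1)}$; taking a supremum over $x$ costs only a further logarithmic factor already absorbed into the constant because the union bound over the $O(n^2)$ intervals is baked into $\kappa_n$. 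The lower-bound statements for $L_n$ are obtained by the substitution $(X_i,Y_i,\gamma)\mapsto(-X_i,-Y_i,1-\gamma)$, under which $L_n$ and $U_n$ swap, exactly as in the "explicit computation" paragraph.

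The main obstacle I anticipate is making the "uniform over all $B\in\BB_n$" control honest without losing more than logarithmic factors: one needs that on a single event of probability $\ge 1-o(1)$, \emph{every} interval $B\in\BB_n$ simultaneously has $T_\ell(B)$ and $T_u(B)$ (and their upper-tail counterparts $N_n(B)-T_\ell(B)$, $N_n(B)-T_u(B)$) within $O(\sqrt{N_n(B)\log n})$ of their means. The clean way is to reuse the Bonferroni bound \eqref{ineq:Bonferroni} with the explicit value $\kappa_n$ for the lower deviations, and a mirror-image Bonferroni bound for the upper deviations, invoking $\#\BB_n = O(n^2)$; the per-interval binomial Chernoff bound then gives deviation $O(\sqrt{N_n(B)\log n})$ with the implied constant depending only on $\alpha$. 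Threading assumptions (A) and (B) through this — (B) to lower-bound $N_n(B)$, (A) to replace an arbitrary optimal interval by one actually in $\BB_n$ — and keeping all constants dependent only on $C_1,C_2,D_1,D_2$ is the bookkeeping-heavy part, but conceptually routine once the $c_\ell,c_u$ asymptotics and the two-sided uniform deviation event are in hand.
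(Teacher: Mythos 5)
Your overall architecture matches the paper's: Hoeffding's inequality plus $\kappa \ge \alpha/(n(n+1))$ to get $c_\ell(m) \ge m\gamma - C_o\sqrt{m\log n}$ (and symmetrically for $c_u$), a well-chosen interval $B(x)$ anchored at $x$ and reaching toward $b_o$ (resp.\ $x_o$, resp.\ of length $\delta_n$), assumptions (A,B) to lower-bound $N_n(B(x))$, assumption (C) to translate count deficits into $y$-axis bounds, the bias--variance balance $\lambda\delta_n^\beta \asymp \sqrt{\rho_n/\delta_n}$ in part~(ii), and the reflection $(X_i,Y_i,\gamma)\mapsto(-X_i,-Y_i,1-\gamma)$ to pass between $L_n$ and $U_n$.

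The genuine gap is in the uniform deviation event. You propose to control only $T_\ell(B)$, $T_u(B)$ and their complements $N_n(B)-T_\ell(B)$, $N_n(B)-T_u(B)$, uniformly over $B\in\BB_n$. These are counts of the events $\{Y_i\le Q_\gamma(X_i)\}$, i.e.\ empirical counts at the single threshold family $r=Q_\gamma(\cdot)$; that suffices for coverage but not for the width bounds. To show, say, $Y_{B:N(B)+1-c_u(N(B))}\le q+h$ you must show $\#\{i\colon X_i\in B,\ Y_i\le q+h\}\ge N(B)+1-c_u(N(B))$, and the only handle on this count is that its mean is at least $N(B)(\gamma+D_1h)$ by (C); you then need the \emph{empirical} count at the displaced threshold $q+h$ to be within $O\bigl(\sqrt{N(B)\log n}\bigr)$ of that mean. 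Your sentence ``assumption (C) converts a fluctuation in the empirical quantile index into a fluctuation on the $y$-axis of the same order'' silently assumes exactly this: (C) is a statement about the population conditional distribution functions, and it transfers to order statistics only if the empirical counts at thresholds $Q_\gamma(x)\pm h$ concentrate. The event actually required is $\sup_{B\in\BB_n,\,r\in\R} N_n(B)^{-1/2}\bigl|S_\ell(B,r)-\Ex S_\ell(B,r)\bigr|\le C_o\sqrt{\log n}$, uniform over intervals \emph{and} over all real thresholds $r$; this is what the paper imports as inequality~\eqref{ineq:Hoeffding3} from Corollary~4.7 of M\"osching and D\"umbgen (2020). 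It can be proved by a union bound over the $O(n^3)$ pairs of interval and order statistic, so the fix is routine, but as written your deviation event sits at the wrong threshold and the key conversion step does not go through. A secondary remark: conditioning on the coverage event~\eqref{ineq:conf.level} is both unnecessary (the inequality $L_n(x)\ge Y_{B:c_\ell(N(B))}$ for $\max(B)\le x$ is just the definition of $L_n$ as a supremum) and insufficient for a conclusion claimed ``with asymptotic probability one'', since that event only has probability $1-\alpha$.
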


Part~(i) shows that for arbitrary fixed numbers $a_o < a < b < b_o$,
\[
	\sup_{x \in (a_o,b]} \, (U_n - p)^+(x)
	+ \sup_{x \in [a,b_o)} \, (p - L_n)^+(x) \
	= \ O_p(\rho_n^{1/2})
\]
Part~(iii) shows that for with asymptotic probability one,
\[
	U_n(x_\ell) \
	< \ \frac{Q_\gamma(x_o-) + Q_\gamma(x_o+)}{2} \
	< \ L_n(x_r)
\]
for $a_o < x_\ell < x_o - D\rho_n$ and $x_o + D\rho_n < x_r < b_o$, where $D$ is the maximum of $C'$ and $4 C^2 \bigl( Q_\gamma(x_o+) - Q_\gamma(x_o-) \bigr)^{-2}$. Consequently, whenever $Q_\gamma$ has a discontinuity at some point in $(a_o,b_o)$, the confidence band $(L_n,U_n)$ crosses a horizontal line on an interval of length $O(\rho_n)$.

\begin{proof}[\bf Proof of Theorem~\ref{thm:asymptotics}]
For symmetry reasons, it suffices to prove the claims about the lower bound function $L_n(\cdot)$. For notational convenience, the additional subscript $n$ for the triangular scheme is often dropped. In what follows, $C_o$ denotes a generic universal constant, and $C'$ denotes a generic constant depending possibly on $C_1, D_1$.

Applying Hoeffding's inequality \citep{Hoeffding_1963} to binomial distributions leads to the inequalities
\begin{equation}
\label{ineq:Hoeffding1}
	c_\ell(m) \ \ge \ m\gamma - \sqrt{m \log(1/\kappa)/2} ,
\end{equation}
where $c_\ell(m) = F_{m,\gamma}^{-1}(\kappa)$. Since $\kappa \ge \alpha/[n(n+1)]$, we may conclude from \eqref{ineq:Hoeffding2} that for $1 \le m \le n$ and sufficiently large $n$,
\begin{equation}
\label{ineq:Hoeffding2}
	c_\ell(m) \ \ge \ m\gamma - C_o \sqrt{m \log(n)} ,
\end{equation}
provided that $C_o > 1$. For $B \in \BB$ and $r \in \R$, let
\[
	\Sigma_\ell(B,r) \ := \ \Ex S_\ell(B,r) .
\]
Then it follows from \citet[Corollary~4.7]{Moesching_Duembgen_2020} that with asymptotic probability one,
\begin{equation}
\label{ineq:Hoeffding3}
	\sup_{B \in \BB, r \in \R} \, N(B)^{-1/2}
		\bigl| (S_\ell - \Sigma_\ell)(B,r) \bigr| \
	\le \ C_o \sqrt{\log (n)} ,
\end{equation}
provided that $C_o > 1$.

From now on, we always assume that $n$ is large enough such that all inequalities for $N(\cdot)$ in assumption~(A) are satisfied, and we tacitly assume that inequalities \eqref{ineq:Hoeffding2} and \eqref{ineq:Hoeffding3} hold true as well. For $B \in \BB$ and $j \in \{0,1,\ldots,N(B)\}$, it follows from \eqref{ineq:Hoeffding3} that
\[
	j \ \le \ S_\ell(B,Y_{B:j}) \ \le \ \Sigma_\ell(B,Y_{B:j}) + C_o \sqrt{N(B) \log(n)} .
\]
Plugging in $j = c_\ell(N(B))$ and using \eqref{ineq:Hoeffding2}, we may conclude that for all $B \in \BB$,
\[
	\Sigma_\ell(B,Y_{B:c_\ell(N(B))}) \
	\ge \ N(B) \gamma - C_o \sqrt{N(B) \log(n)} ,
\]
provided that $C_o > 2$. Combining this inequality with the definition~\eqref{eq:Def.L} of $L$ shows that $L$ is bounded from below by the following deterministic function $\tilde{L}$:
\[
	\tilde{L}(x) \
	:= \ \sup_{B\in\BB\colon \max(B) \le x} \,
		\inf \bigl\{r \in \R \colon
			\Sigma_\ell(B,r) \ge N(B) \gamma - C_o \sqrt{N(B) \log(n)} \bigr\} .
\]
Consequently, it suffices to verify the assertions for $\tilde{L}$ instead of $L$.

As to part~(i), suppose that $Q_\gamma \equiv q$ on $(a_o,b_o)$. For $x \in [a_o+C_2\rho_n, b_o)$, let $B(x)$ be the largest interval in $\BB_n$ which is contained in $(a_o,x]$. By assumptions~(A-B), $N(B(x)) \ge 2^{-1} C_1 n (x - a_o)$, and assumption~(C) implies that for $h \in (0,D_2]$,
\[
	\Sigma_\ell(B(x),q - h) \
	= \ \sum_{i\colon X_i \in B(x)} F(Q_\gamma(X_i) - h \,|\, X_i) \
	\le \ N(B(x)) \gamma - D_1 N(B(x)) h .
\]
Consequently, $\Sigma_\ell(B(x)) \ge N(B(x)) \gamma - C_o \sqrt{N(B(x)) \log(n)}$ implies that
\[
	h \
	\le \ C_o D_1^{-1} \sqrt{\log(n) / N(B(x))} \
	\le \ h_n(x) := 2^{1/2} C_o D_1^{-1} C_1^{-1/2} \sqrt{\rho_n/(x - a_o)} .
\]
If we restrict $x$ to $[a_0+C'\rho_n,b_o)$ with sufficiently large $C' \ge C_2$, the number $h_n(x)$ is no larger than $D_2$, and we may conclude that
\[
	\tilde{L}(x) \
	\ge \ \inf \bigl\{ r \in \R \colon
		\Sigma_\ell(B(x),r) \ge N(B(x)) \gamma - C_o \sqrt{N(B(x)) \log(n)} \bigr\} \
	\ge \ q - h_n(x) ,
\]
which proves part~(i).

Concerning part~(ii), suppose that $Q_\gamma$ is H\"{o}lder-continuous on $(a_o,b_o)$ with exponent $\beta \in (0,1]$ and constant $\lambda > 0$. Since $\delta_n/\rho_n \to \infty$, we may assume that $\delta_n \ge C_2 \rho_n$. For $x \in [a_o+\delta_n,b_o)$, let $B(x)$ be the largest interval in $\BB_n$ which is contained in $(x - \delta_n, x]$. Then assumptions (A-B) imply that
$N(B(x)) \ge 2^{-1} C_1 n \delta_n$. For $h \in (0,D_2]$, it follows from assumption~(C) that
\begin{align*}
	\Sigma_\ell(B(x),Q_\gamma(x) - h) \
	&= \ \sum_{i\colon X_i \in B(x)} F(Q_\gamma(X_i) - h \,|\, X_i) \\
	&\le \ \sum_{i\colon X_i \in B(x)} F(Q_\gamma(x) + L \delta_n^\beta - h \,|\, X_i) \\
	&\le \ N(B(x)) - D_1 N(B(x)) (h - L\delta_n^\beta)^+ .
\end{align*}
Consequently, $\Sigma_\ell(B(x),Q_\gamma(x) - h) \ge N(B(x)) \gamma - C_o \sqrt{N(B(x)) \log(n)}$ implies that
\[
	h \
	\le \ L \delta_n^\beta + C_o \sqrt{N(B(x)) \log(n)} \
	\le h_n ,
\]
where
\[
	h_n \
	:= \ L \delta_n^\beta + 2^{1/2} C_o C_1^{-1/2} \sqrt{\rho_n/\delta_n} \
	= \ (1 + 2^{1/2} C_o C_1^{-1/2}) \lambda^{1/(2\beta+1)} \rho_n^{\beta/(2\beta+1)} .
\]
Since $h_n \to 0$, the requirement $h_n \le D_2$ is satisfied for sufficiently large $n$, and then for all $x \in [a_o+\delta_n,b_o)$,
\[
	\tilde{L}(x) \
	\ge \ \inf \bigl\{ r \in \R \colon
		\Sigma_\ell(B(x),r) \ge N(B(x)) \gamma - C_o \sqrt{N(B(x)) \log(n)} \bigr\} \
	\ge \ Q_\gamma(x) - h_n ,
\]
which proves part~(ii).

Finally, suppose that $Q_\gamma$ is discontinuous at $x_o \in (a_o,b_o)$. For $x \in (a_o,x_o - C_2\rho_n]$, let $B(x)$ be the largest interval in $\BB_n$ which is contained in $[x,x_o)$. By assumptions~(A-B), $N(B(x)) \ge 2^{-1} C_1 n (x_o - x)$. For $h \in (0,D_2]$, it follows from assumption~(C) that
\[
	\Sigma_\ell(B(x),Q_\gamma(x_o-) - h) \
	\le \ \sum_{i\colon X_i \in B(x)} F(Q_\gamma(X_i) - h \,|\, X_i) \
	\le \ N(B(x)) \gamma - D_1 N(B(x)) h .
\]
Consequently, $\Sigma_\ell(B(x)) \ge N(B(x)) \gamma - C_o \sqrt{N(B(x)) \log(n)}$ implies that
\[
	h \
	\le \ C_o D_1^{-1} \sqrt{\log(n) / N(B(x))} \
	\le \ h_n(x) := 2^{1/2} C_o D_1^{-1} C_1^{-1/2} \sqrt{\rho_n/(x_o - x)} .
\]
If we restrict $x$ to $(a_o,x_o - C'\rho_n]$ for sufficiently large $C' \ge C_2$, then $h_n(x) \le D_2$, and we may conclude that
\[
	\tilde{L}(x) \
	\ge \ \inf \bigl\{ r \in \R \colon
		\Sigma_\ell(B(x),r) \ge N(B(x)) \gamma - C_o \sqrt{N(B(x)) \log(n)} \bigr\} \
	\ge \ q - h_n(x) ,
\]
which proves part~(iii).
\end{proof}

%----------------------------------
\section{Numerical examples}
\label{sec:Examples}
%----------------------------------

Figure~\ref{fig:Data_N500} shows a simulated data set of $n = 500$ pairs $(X_i,Y_i)$, $1 \le i \le n$, where $X_i \in (0,50)$. To computed $95\%$-confidence bands for a corresponding quantile function $Q_\gamma$, we used the family $\BB$ of all compact intervals $B$ with endpoints in $\{X_1,\ldots,X_n\}$ such that $\# B$ is in $\{1 + \ell(\ell-1)/2 \colon \ell \ge 1\}$.

For $\gamma = 0.5$ and $\alpha = 0.05$, the Bonferroni bound for the critical value $\kappa$ was $1.041119 \cdot 10^{-5}$, but by means of $199999$ Monte Carlo simulations of \eqref{eq:UI-P-value} we obtained the more accurate and substantially larger value $\hat{\kappa} = 9.901973 \cdot 10^{-4}$. Figure~\ref{fig:CB50_N500} shows the data with the resulting $95\%$-confidence band for $Q_{0.5}$; the latter is shown as well.

For $\gamma \in \{0.25,0.75\}$ and $\alpha = 0.05$, the Bonferroni bound for the critical value $\kappa$ was $1.152103 \cdot 10^{-5}$, but with $199999$ Monte Carlo simulations of \eqref{eq:UI-P-value} we obtained the value $\hat{\kappa} = 1.07446 \cdot 10^{-4}$. Figures~\ref{fig:CB25_N500} and \ref{fig:CB75_N500} show the data with the resulting $95\%$-confidence band for $Q_{0.25}$ and $Q_{0.75}$, respectively.

Figures~\ref{fig:Data_N2500}, \ref{fig:CB50_N2500}, \ref{fig:CB25_N2500} and \ref{fig:CB75_N2500} show analogous pictures for a data set of size $n = 2500$.

\begin{figure}
\centering
\includegraphics[width=0.9\textwidth]{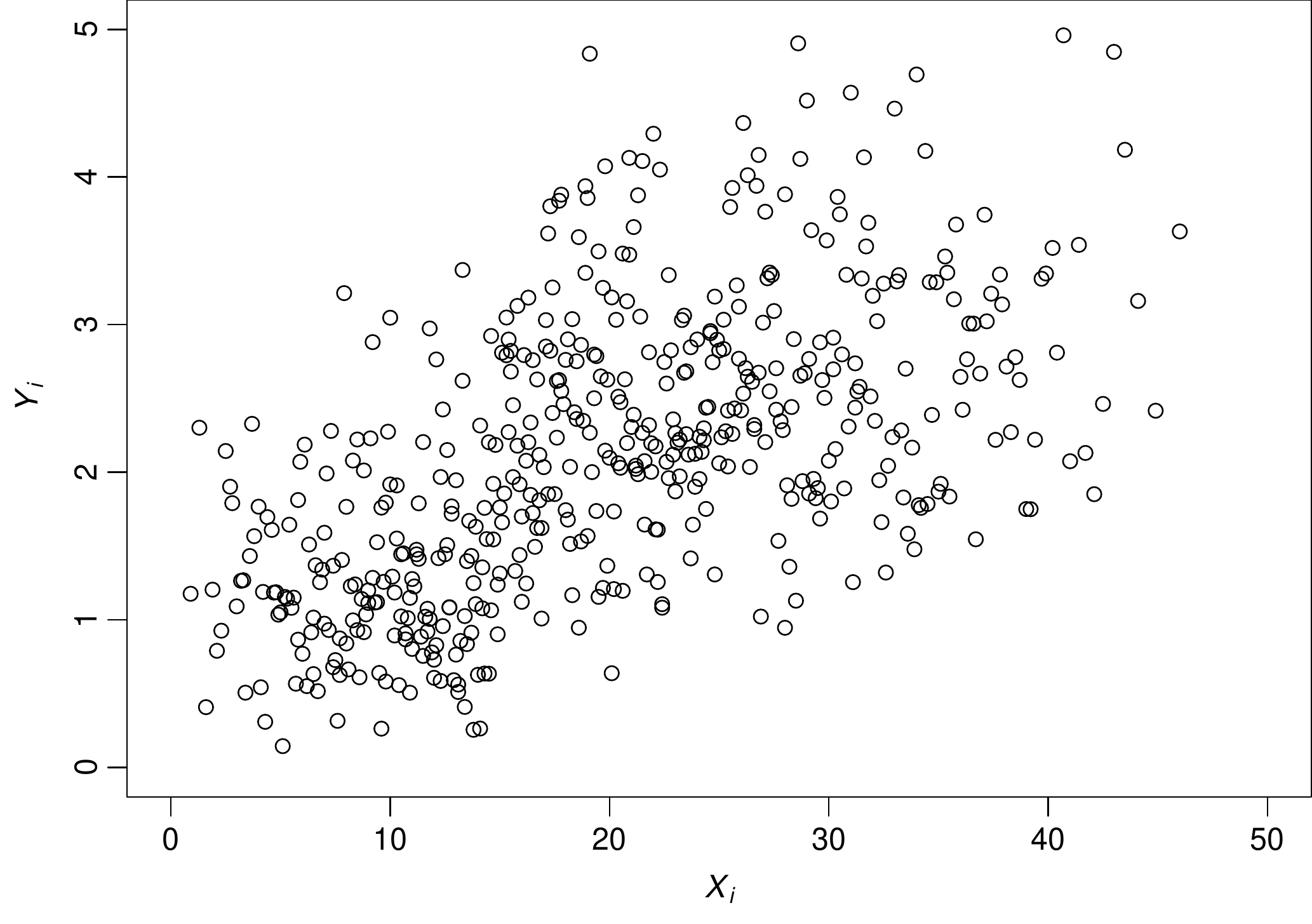}
\caption{An example data set of size $n = 500$.}
\label{fig:Data_N500}
\end{figure}

\begin{figure}
\centering
\includegraphics[width=0.9\textwidth]{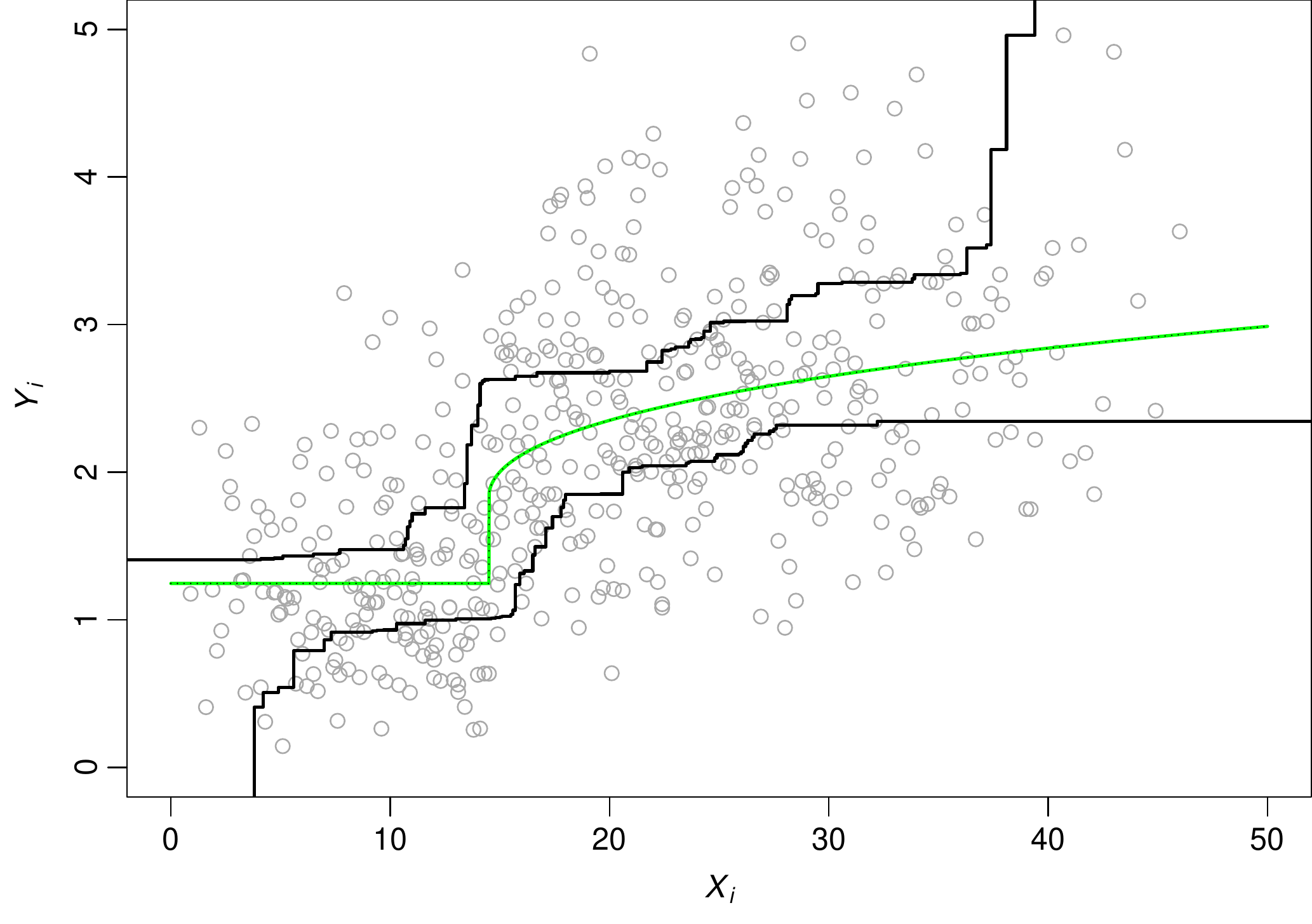}
\caption{$95\%$-Confidence band for $Q_{0.5}$ for the data in Figure~\ref{fig:Data_N500}, together with the true quantile function $Q_{0.5}$ (green, dotted).}
\label{fig:CB50_N500}
\end{figure}

\begin{figure}
\centering
\includegraphics[width=0.9\textwidth]{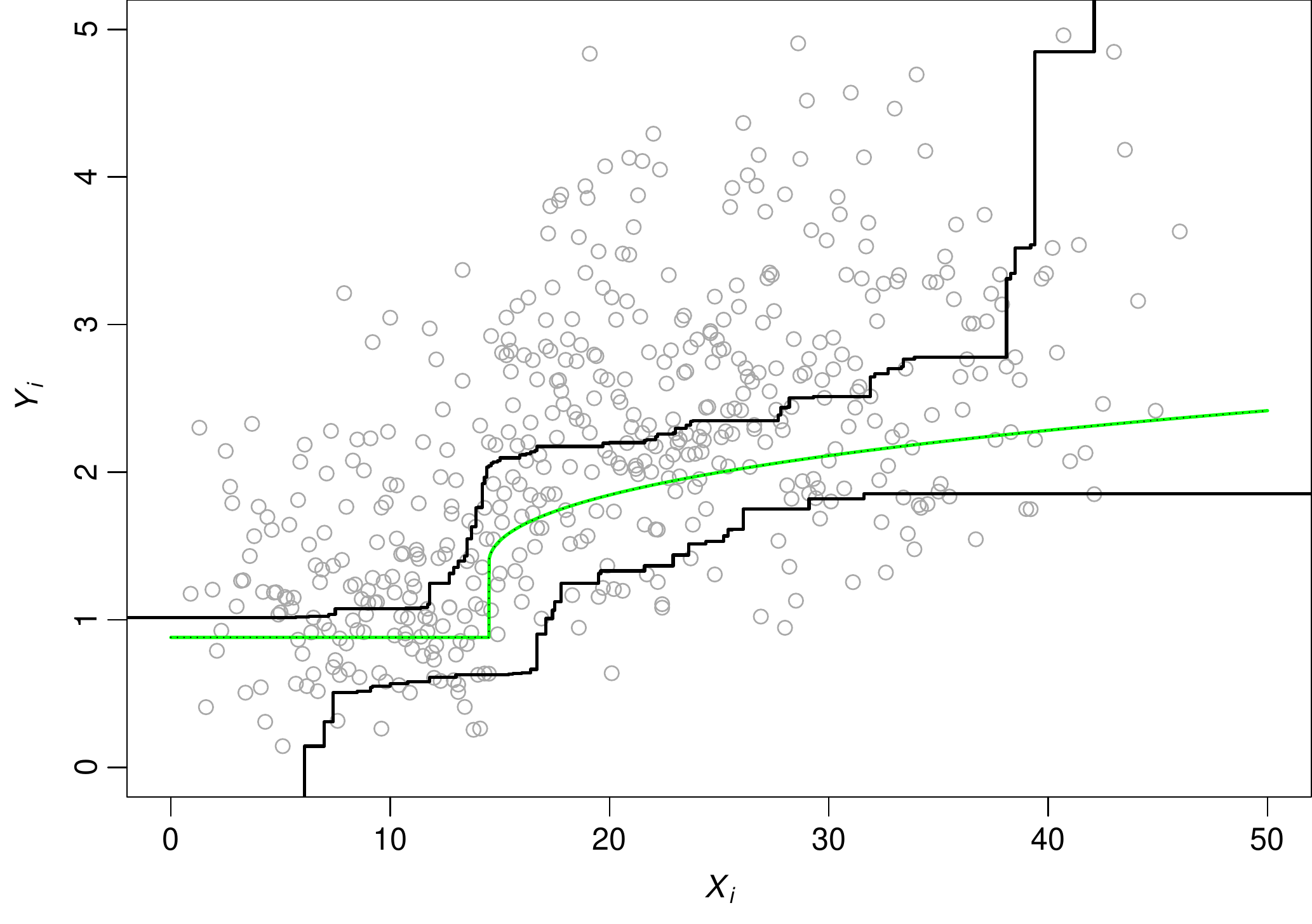}
\caption{$95\%$-Confidence band for $Q_{0.25}$ for the data in Figure~\ref{fig:Data_N500}, together with the true quantile function $Q_{0.25}$ (green, dotted).}
\label{fig:CB25_N500}
\end{figure}

\begin{figure}
\centering
\includegraphics[width=0.9\textwidth]{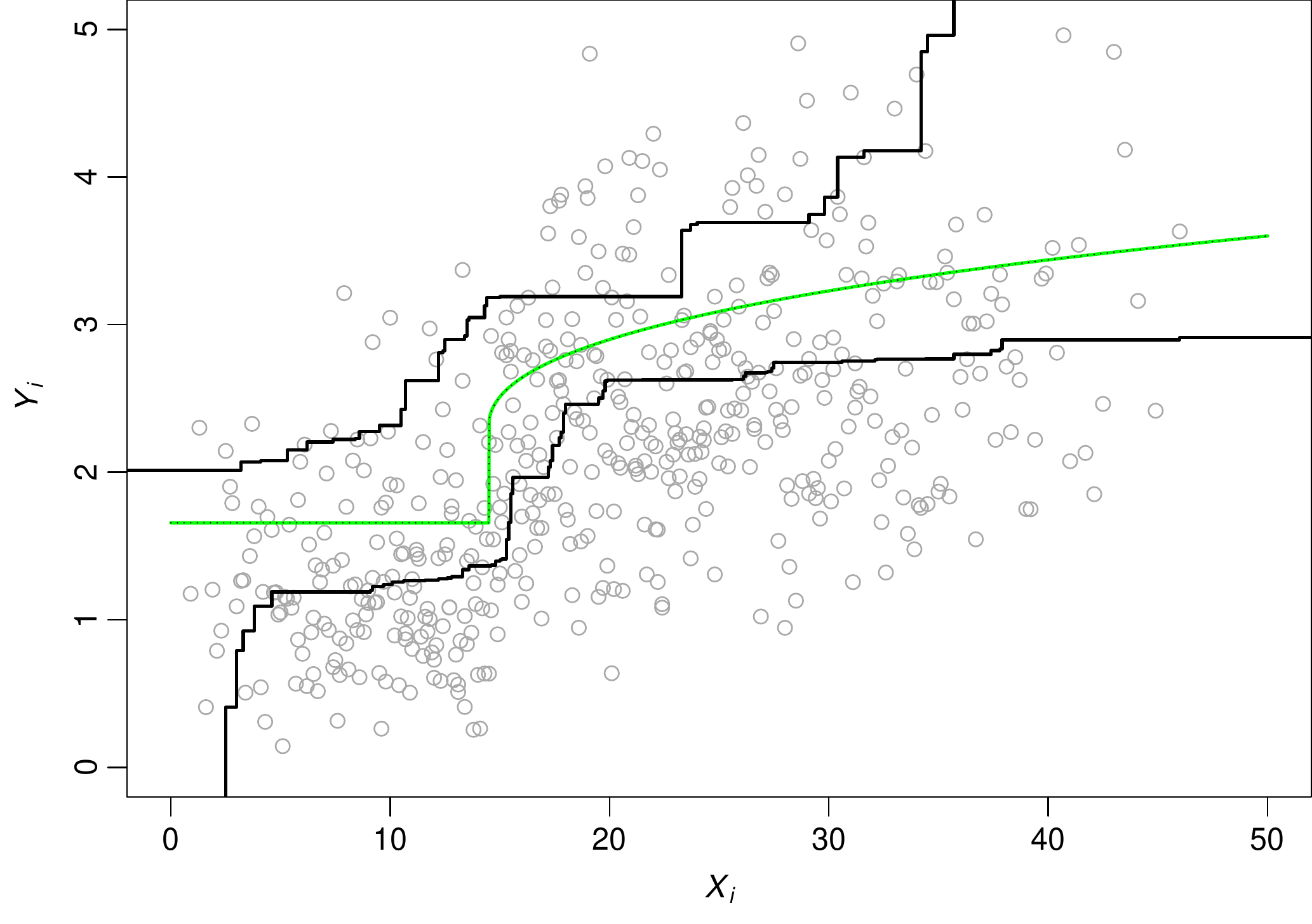}
\caption{$95\%$-Confidence band for $Q_{0.75}$ for the data in Figure~\ref{fig:Data_N500}, together with the true quantile function $Q_{0.75}$ (green, dotted).}
\label{fig:CB75_N500}
\end{figure}

\begin{figure}
\centering
\includegraphics[width=0.9\textwidth]{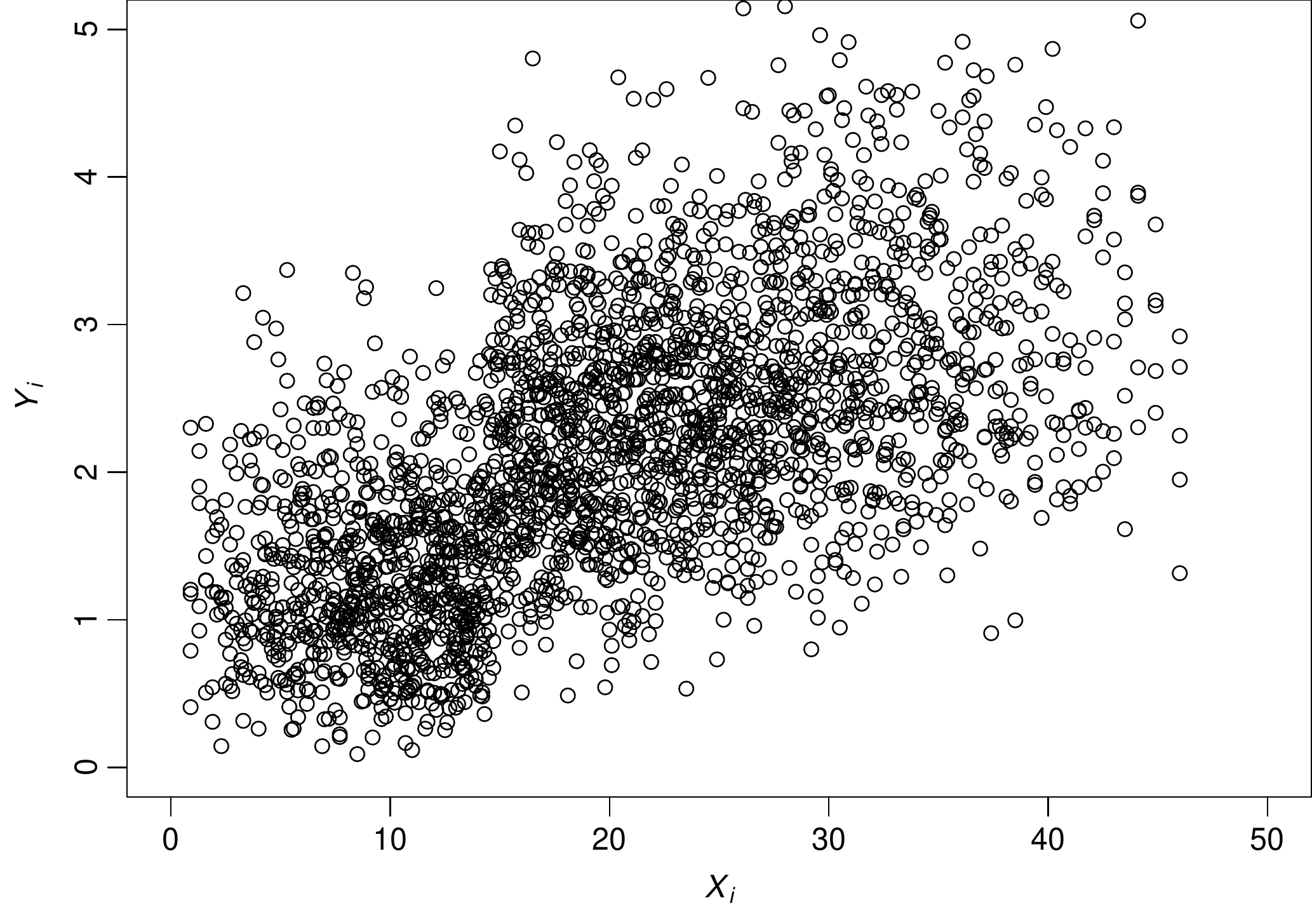}
\caption{An example data set of size $n = 2500$.}
\label{fig:Data_N2500}
\end{figure}

\begin{figure}
\centering
\includegraphics[width=0.9\textwidth]{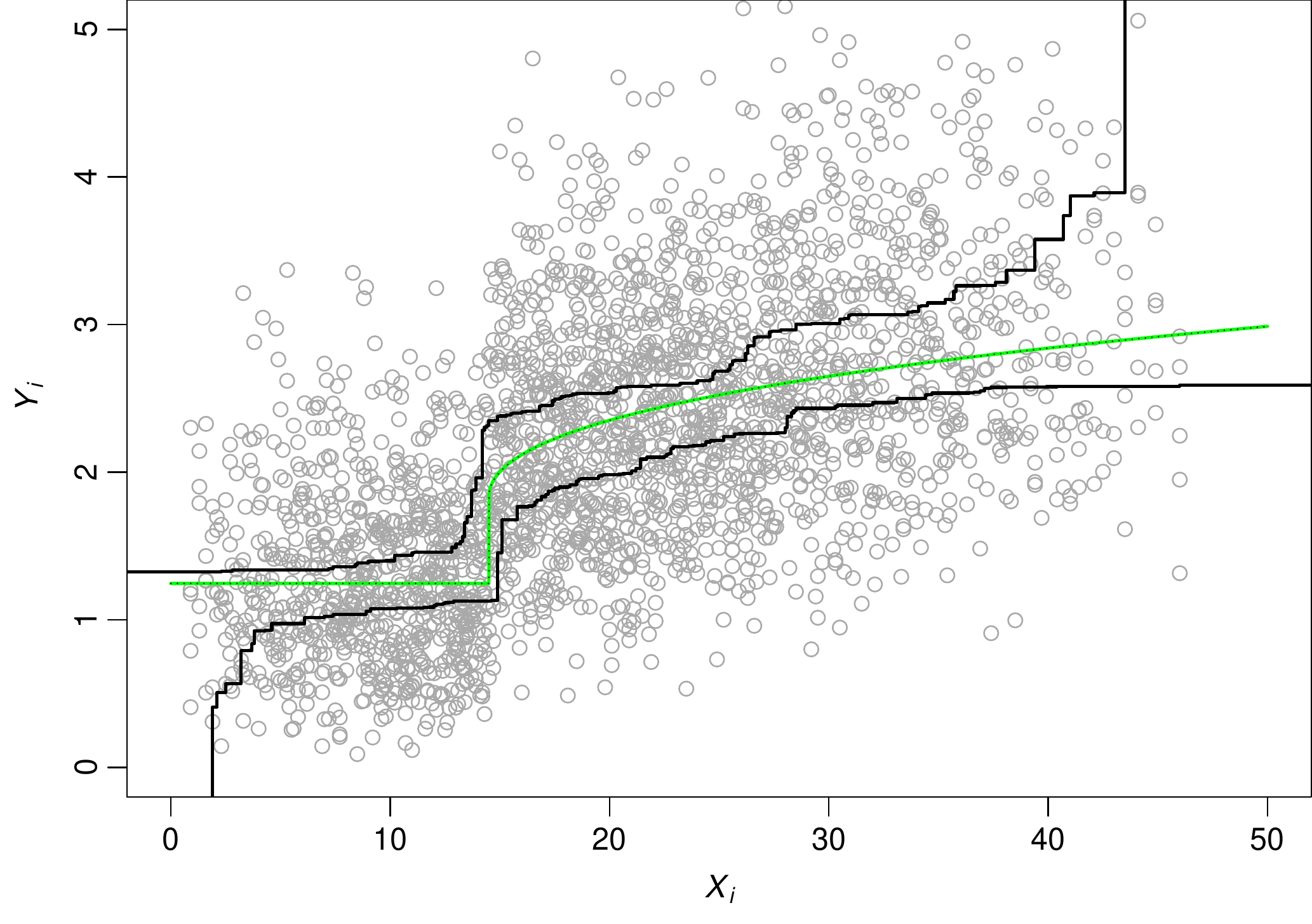}
\caption{$95\%$-Confidence band for $Q_{0.5}$ for the data in Figure~\ref{fig:Data_N2500}, together with the true quantile function $Q_{0.5}$ (green, dotted).}
\label{fig:CB50_N2500}
\end{figure}

\begin{figure}
\centering
\includegraphics[width=0.9\textwidth]{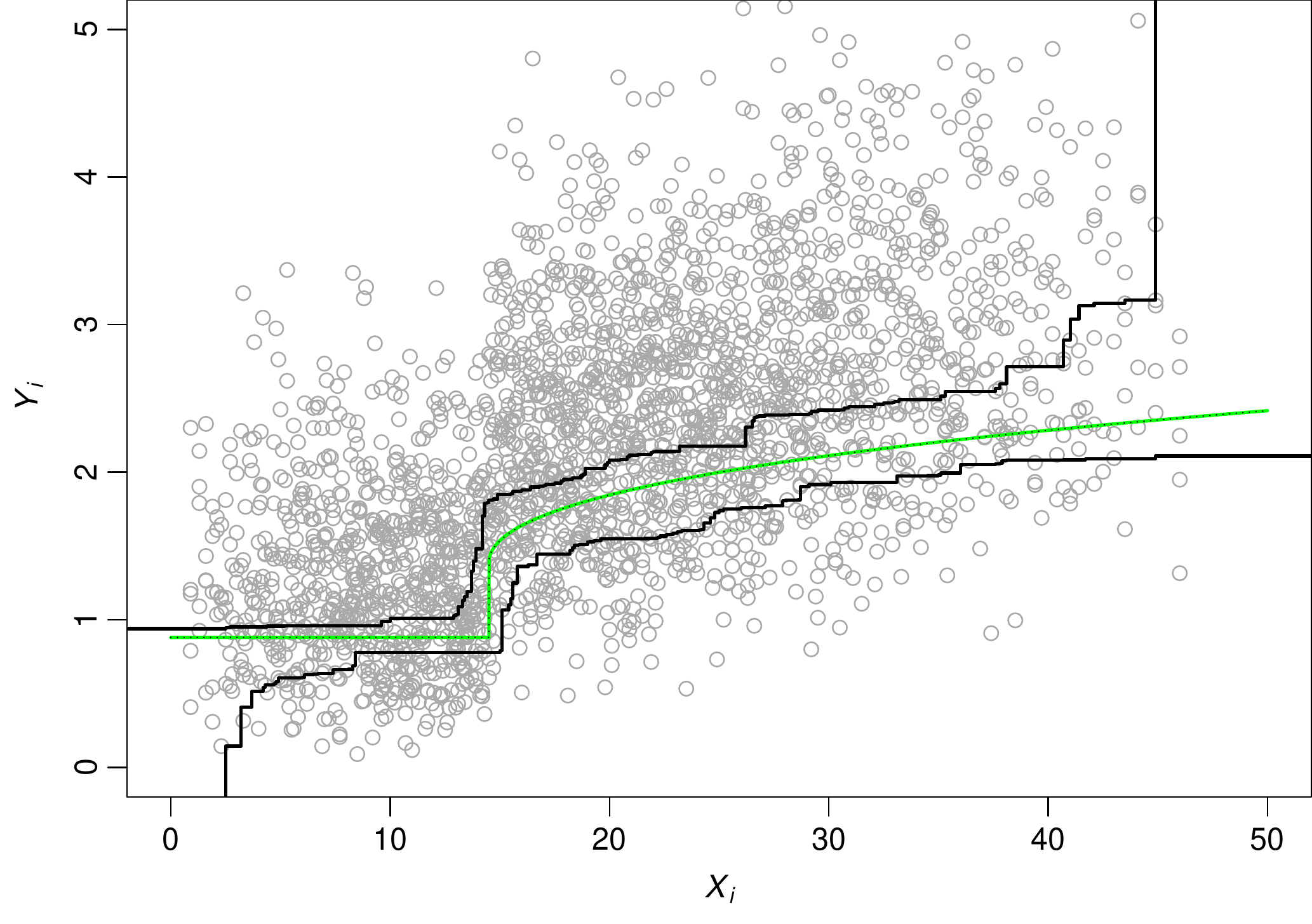}
\caption{$95\%$-Confidence band for $Q_{0.25}$ for the data in Figure~\ref{fig:Data_N2500}, together with the true quantile function $Q_{0.25}$ (green, dotted).}
\label{fig:CB25_N2500}
\end{figure}

\begin{figure}
\centering
\includegraphics[width=0.9\textwidth]{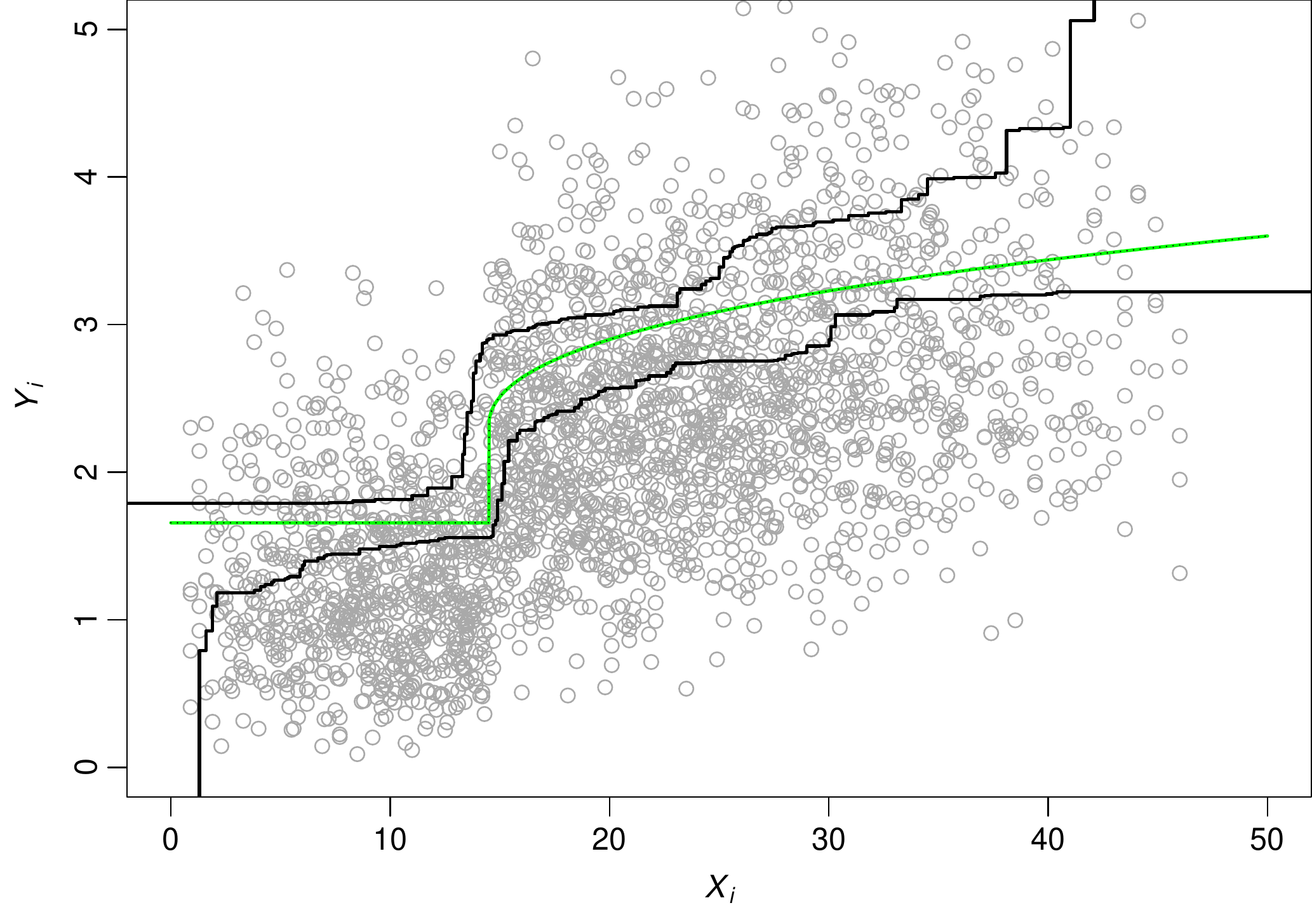}
\caption{$95\%$-Confidence band for $Q_{0.75}$ for the data in Figure~\ref{fig:Data_N2500}, together with the true quantile function $Q_{0.75}$ (green, dotted).}
\label{fig:CB75_N2500}
\end{figure}

\paragraph{An stronger shape constraint.}
In some applications, it is plausible that the quantile functions should be isotonic and convex or isotonic and concave. Both constraints are special cases of the more general assumption that the quantile functions are S-shaped. That means, they are isotonic on $\XX$, and for some inflection point $\mu \in [-\infty,\infty]$, they are convex on $\XX \cap (-\infty,\mu]$ and concave on $[\mu,\infty) \cap \XX$. Since S-shapedness implies isotonicity, assuming the quantile function $Q_\gamma$ is S-shaped allows for the following refinement of our $(1 - \alpha)$-confidence band $(L,U)$ for $Q_\gamma$. We compute $(\tilde{L}, \tilde{U})$ with
\begin{align*}
	\tilde{L}(x) \
	&:= \ \inf \bigl\{ S(x) \colon S \ \text{S-shaped}, L \le S \le U \bigr\} , \\
	\tilde{U}(x) \
	&:= \ \sup \bigl\{ S(x) \colon S \ \text{S-shaped}, L \le S \le U \bigr\} .
\end{align*}
Precisely, we choose a fine grid $M$ of potential values for the inflection point $\mu$ of $Q_\gamma$ and determine
\begin{align*}
	\tilde{L}(x,\mu) \
	&:= \ \inf \bigl\{ S(x) \colon S \ \text{S-shaped with inflection point} \ \mu,
		\, L \le S \le U \bigr\} , \\
	\tilde{U}(x,\mu) \
	&:= \ \sup \bigl\{ S(x) \colon S \ \text{S-shaped with inflection point} \ \mu,
		\, L \le S \le U \bigr\} .
\end{align*}
These functions $\tilde{L}(\cdot,\mu)$ and $\tilde{U}(\cdot,\mu)$ are easily determined, and then we approximate $\tilde{L}(x)$ and $\tilde{U}(x)$ with $\min_{\mu \in M} \tilde{L}(x,\mu)$ and $\max_{\mu \in M} \tilde{U}(x,\mu)$, respectively.

For our numerical example with sample size $n = 500$, the bands $(\tilde{L},\tilde{U})$ differ only little from $(L,U)$, but for sample size $n = 2500$, the improvement is substantial; see Figures~\ref{fig:CB50_N2500_S}, \ref{fig:CB25_N2500_S} and \ref{fig:CB75_N2500_S}. The original band $(L,U)$ is depicted with thin black lines, and the refined band $(\tilde{L},\tilde{U})$ is depicted with blue lines.

\begin{figure}
\centering
\includegraphics[width=0.9\textwidth]{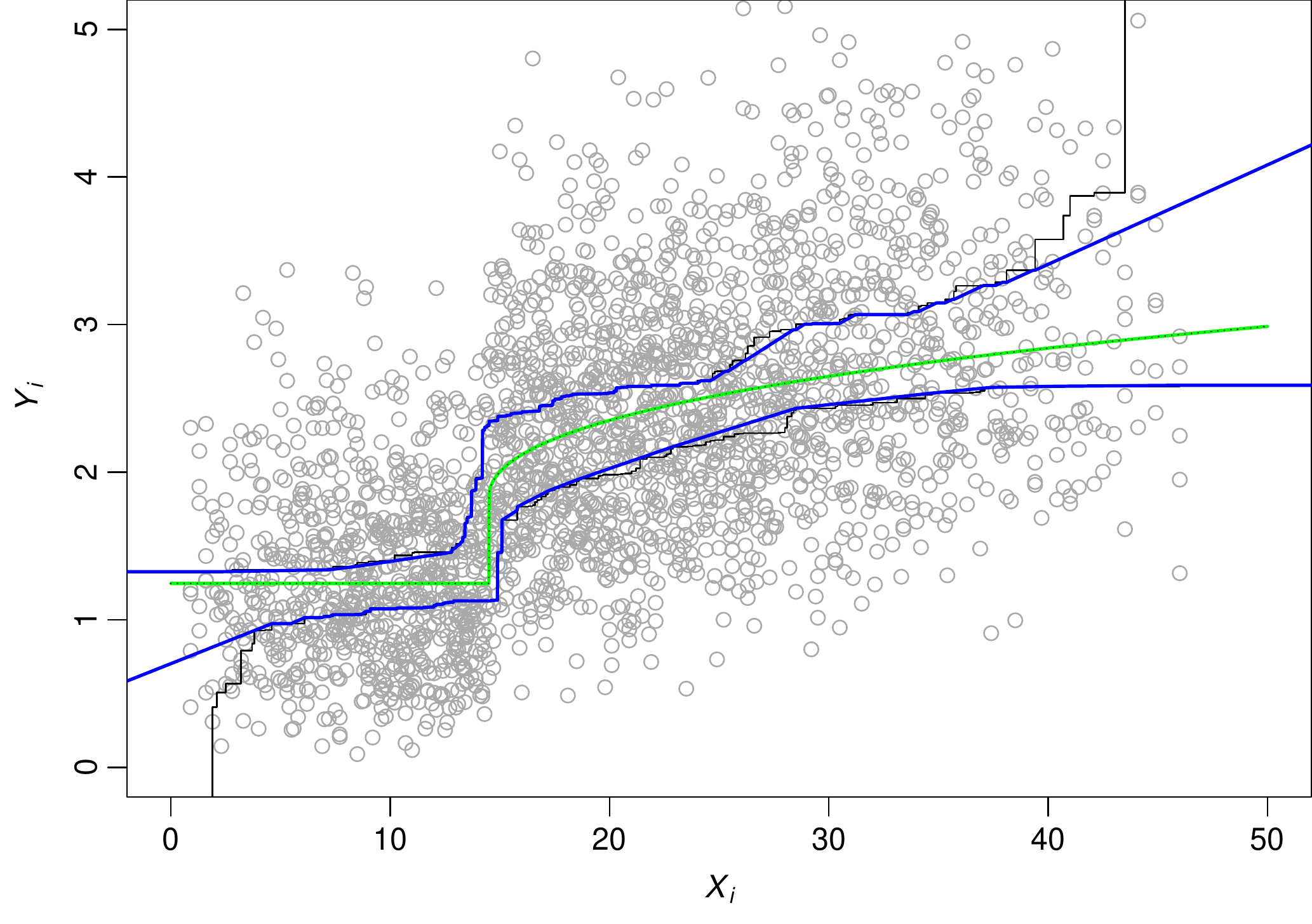}
\caption{$95\%$-Confidence band for S-shaped $Q_{0.5}$ for the data in Figure~\ref{fig:Data_N2500}, together with the true quantile function $Q_{0.5}$ (green, dotted).}
\label{fig:CB50_N2500_S}
\end{figure}

\begin{figure}
\centering
\includegraphics[width=0.9\textwidth]{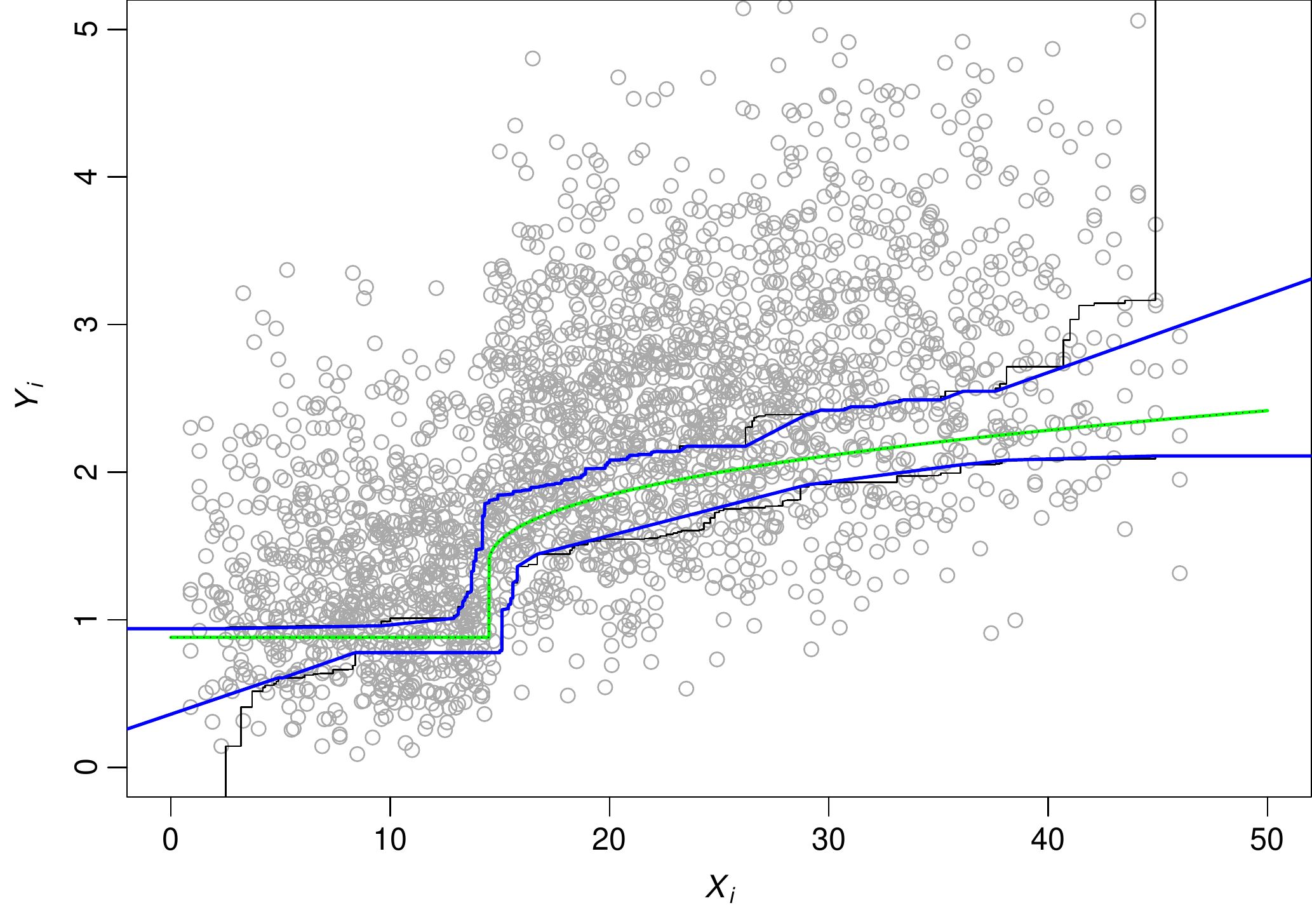}
\caption{$95\%$-Confidence band for S-schaped $Q_{0.25}$ for the data in Figure~\ref{fig:Data_N2500}, together with the true quantile function $Q_{0.25}$ (green, dotted).}
\label{fig:CB25_N2500_S}
\end{figure}

\begin{figure}
\centering
\includegraphics[width=0.9\textwidth]{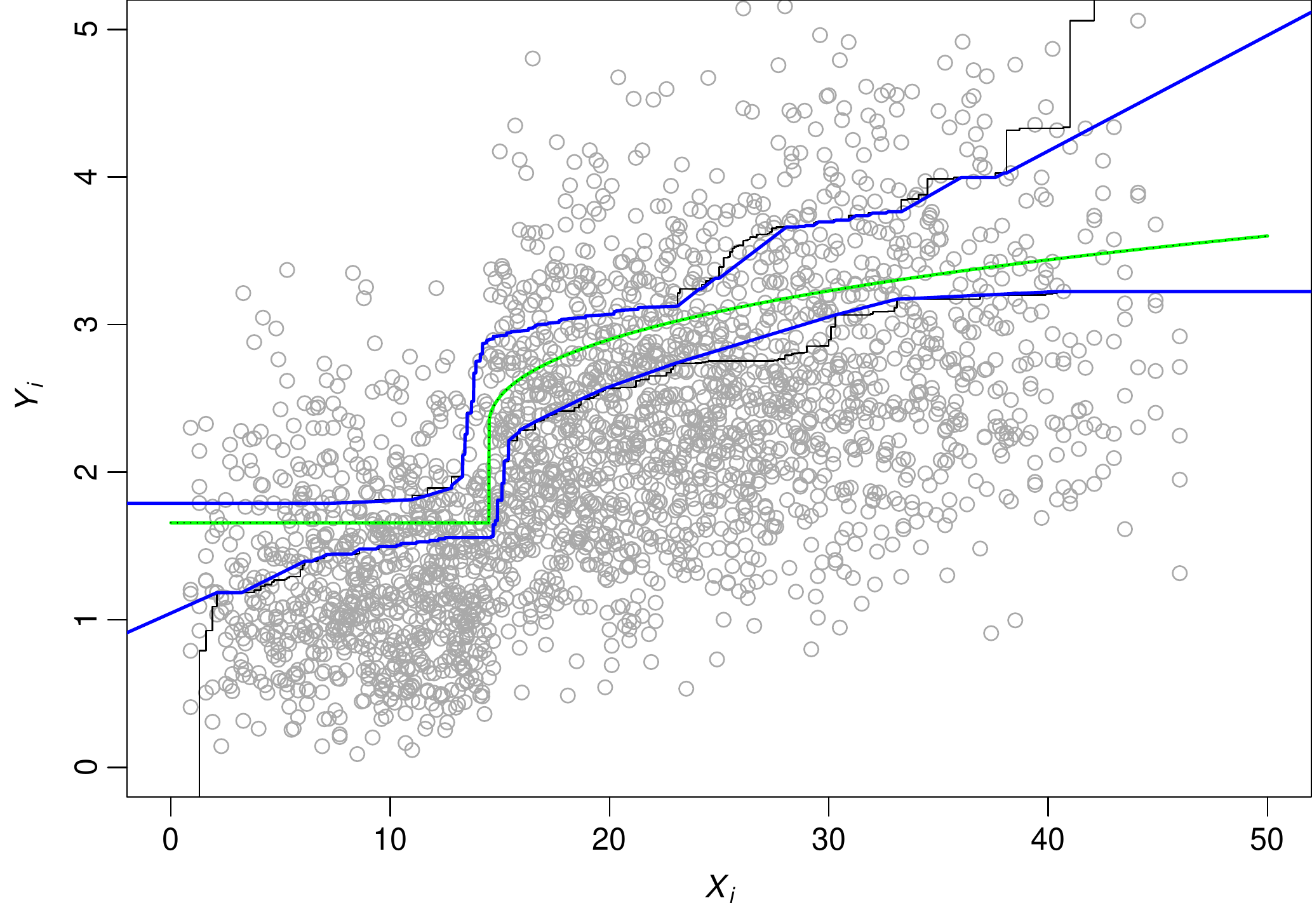}
\caption{$95\%$-Confidence band for S-shaped $Q_{0.75}$ for the data in Figure~\ref{fig:Data_N2500}, together with the true quantile function $Q_{0.75}$ (green, dotted).}
\label{fig:CB75_N2500_S}
\end{figure}

%\bibliographystyle{ims}
%\bibliography{SignCBs_biblio}

%%%%%%%%%%%%%%
\end{document}